\def\NZQ{\mathbb}               % the font for N,Z,Q,R,C
\def\QQ{{\NZQ Q}}
\def\ZZ{{\NZQ Z}}
\def\RR{{\NZQ R}}
\def\frk{\mathfrak}               % font for "Fraktur"
\def\Phi{{\frk N}}
\def\eb{{\bold e}}
\def\opn#1#2{\def#1{\operatorname{#2}}} % to make operators
\opn\chara{char} 
\opn\length{\ell} 
\opn\pd{pd} 
\opn\rk{rk}
\opn\projdim{proj\,dim} 
\opn\injdim{inj\,dim} 
\opn\rank{rank}
\opn\depth{depth} 
\opn\grade{grade} 
\opn\height{height}
\opn\embdim{emb\,dim} 
\opn\codim{codim}
\opn\Tr{Tr} 
\opn\bigrank{big\,rank}
\opn\superheight{superheight}
\opn\lcm{lcm}
\opn\trdeg{tr\,deg}%\emph{
\opn\reg{reg} 
\opn\lreg{lreg} 
\opn\ini{in} 
\opn\lpd{lpd}
\opn\size{size}
\opn\mult{mult}
\opn\dist{dist}
\opn\cone{cone}
\opn\lex{lex}
\opn\rev{rev}
\opn\div{div} \opn\Div{Div} \opn\cl{cl} \opn\Cl{Cl}
\opn\Spec{Spec} \opn\Supp{Supp} \opn\supp{supp} \opn\Sing{Sing}
\opn\Ass{Ass} \opn\Min{Min}
\opn\Ann{Ann} \opn\Rad{Rad} \opn\Soc{Soc}
\opn\Syz{Syz} \opn\Im{Im} \opn\Ker{Ker} \opn\Coker{Coker}
\opn\Am{Am} \opn\Hom{Hom} \opn\Tor{Tor} \opn\Ext{Ext}
\opn\End{End} \opn\Aut{Aut} \opn\id{id} \opn\ini{in}
\opn\nat{nat}
\opn\pff{pf}%   \pf exists already
\opn\Pf{Pf} \opn\GL{GL} \opn\SL{SL} \opn\mod{mod} \opn\ord{ord}
\opn\Gin{Gin}
\opn\Hilb{Hilb}\opn\adeg{adeg}\opn\std{std}\opn\ip{infpt}
\opn\Pol{Pol}
\opn\sat{sat}
\opn\Var{Var}
\opn\Gen{Gen}
\opn\aff{aff} \opn\con{conv} \opn\relint{relint} \opn\st{st}
\opn\lk{lk} \opn\cn{cn} \opn\core{core} \opn\vol{vol}
\opn\link{link} \opn\star{star}
\opn\gr{gr}
\def\Pc{{\mathcal P}}
\def\Qc{{\mathcal Q}}
\def\Rc{{\mathcal R}}
\def\pot#1#2{#1[\kern-0.28ex[#2]\kern-0.28ex]}
\opn\dirlim{\underrightarrow{\lim}}
\opn\inivlim{\underleftarrow{\lim}}
\def\Implies{\ifmmode\Longrightarrow \else
	\unskip${}\Longrightarrow{}$\ignorespaces\fi}
\def\implies{\ifmmode\Rightarrow \else
	\unskip${}\Rightarrow{}$\ignorespaces\fi}
\def\iff{\ifmmode\Longleftrightarrow \else
	\unskip${}\Longleftrightarrow{}$\ignorespaces\fi}
\newtheorem{Theorem}{Theorem}[section]
\newtheorem{Lemma}[Theorem]{Lemma}
\newtheorem{Proposition}[Theorem]{Proposition}
\newtheorem{Remark}[Theorem]{Remark}
\newtheorem{Example}[Theorem]{Example}
\newtheorem{Question}[Theorem]{Question}
\let\epsilon\varepsilon
\let\phi=\varphi
\let\kappa=\varkappa
\def\qed{\ifhmode\textqed\fi
	\ifmmode\ifinner\quad\qedsymbol\else\dispqed\fi\fi}
\def\textqed{\unskip\nobreak\penalty50
	\hskip2em\hbox{}\nobreak\hfil\qedsymbol
	\parfillskip=0pt \finalhyphendemerits=0}
\def\dispqed{\rlap{\qquad\qedsymbol}}
\opn\dis{dis}
\opn\height{height}
\opn\dist{dist}
\def\pnt{{\raise0.5mm\hbox{\large\bf.}}}
\opn\Lex{Lex}
\begin{document}
\title{Ehrhart polynomials with negative coefficients}
\author{Takayuki Hibi, Akihiro Higashitani, Akiyoshi Tsuchiya 
\\ and Koutarou Yoshida}
\thanks{
{\bf 2010 Mathematics Subject Classification:}
Primary 52B20; Secondary 52B11. \\
\, \, \, {\bf Keywords:}
integral convex polytope,
Ehrhart polynomial, $\delta$-vector.}
\address{Takayuki Hibi,
Department of Pure and Applied Mathematics,
Graduate School of Information Science and Technology,pe
Osaka University,
Toyonaka, Osaka 560-0043, Japan}
\email{hibi@math.sci.osaka-u.ac.jp}
\address{Akihiro Higashitani,
Department of Mathematics, Kyoto Sangyo University, 
Motoyama, Kamigamo, Kita-Ku, Kyoto, Japan, 603-8555}
\email{ahigashi@cc.kyoto-su.ac.jp}
\address{Akiyoshi Tsuchiya,
Department of Pure and Applied Mathematics,
Graduate School of Information Science and Technology,
Osaka University,
Toyonaka, Osaka 560-0043, Japan}
\email{a-tsuchiya@cr.math.sci.osaka-u.ac.jp}
\address{Koutarou Yoshida,
Department of Pure and Applied Mathematics,
Graduate School of Information Science and Technology,
Osaka University,
Toyonaka, Osaka 560-0043, Japan}
\email{u912376b@ecs.osaka-u.ac.jp}

\begin{abstract}
It is shown that, for each $d \geq 4$, there exists
an integral convex polytope $\Pc$ 
of dimension $d$ such that each of the coefficients of 
$n, n^{2}, \ldots, n^{d-2}$ of its Ehrhart polynomial
$i(\Pc,n)$ is negative.  
Moreover, it is also shown that for each $d \geq 3$ and $1 \leq k \leq d-2$, 
there exists an integral convex polytope $\Pc$ of dimension $d$ such that 
the coefficient of $n^k$ of the Ehrhart polynomial $i(\Pc,n)$ of $\Pc$ is negative 
and all its remaining coefficients are positive. 
Finally, 
we consider all the possible sign patterns of the coefficients of the Ehrhart polynomials 
of low dimensional integral convex polytopes.
%it is also shown that for a given integer $3 \leq d \leq 6$ and 	integers $i_1,\ldots,i_q$ with $1 \leq i_1 < \cdots < i_q \leq d-2$, 
%there exists an integral convex polytope of dimension $d$ whose Ehrhart polynomial satisfies 
%that all the coefficients of $n^{i_1}, \ldots, n^{i_q}$ are negative 
%and all the remaining coefficients are positive. 
\end{abstract}
\maketitle
\section*{Introduction}
A convex polytope is called {\em integral} if any of its vertices has
integer coordinates.  
Let $\Pc \subset \RR^N$ be an integral convex polytope of dimension $d$. 
We define the function $i(\Pc,n)$ by setting 
\[
i(\Pc,n) = \sharp(n\Pc \cap \ZZ^N) \;\; \text{for} \;\; n = 1, 2, \ldots,
\] 
where $n\Pc = \{ \, n \alpha \, : \, \alpha \in \Pc \, \}$ and $\sharp X$ is the cardinality of a finite set $X$.
The study on $i(\Pc,n)$ originated in Ehrhart \cite{Ehrhart} who proved that 
$i(\Pc,n)$ is a polynomial in $n$ of degree $d$ with the constant term 1. 
Furthermore, the coefficients of $n^{d}$ and $n^{d-1}$ of $i(\Pc,n)$
are always positive (\cite[Corollary 3.20 and Theorem 5.6]{BeckRobins}).
We say that $i(\Pc,n)$ is the {\em Ehrhart polynomial} of $\Pc$. 

In his talk of the Clifford Lectures at Tulane University, 25--27 March 2010,
Richard Stanley gave an Ehrhart polynomial with
a negative coefficient.  More precisely,
the polynomial $\frac{\,13\,}{6}n^{3} + n^{2} - \frac{\,1\,}{6}n + 1$
is the Ehrhart polynomial of
the tetrahedron in $\RR^{3}$
with vertices $(0,0,0), (1,0,0), (0,1,0)$ and $(1,1,13)$.
See \cite[Example 3.22]{BeckRobins}.
His talk naturally inspired us to find
integral convex polytopes
of dimension $\geq 4$ whose Ehrhart polynomials possess negative coefficients.
Consult 
\cite[Part II]{HibiRedBook} and \cite[pp.~235--241]{StanleyEC}
for fundamental materials on Ehrhart polynomials.

The primary purpose of the present paper is, for each $d \geq 4$, 
to show the existence of an integral
convex polytope of dimension $d$
such that each of the coefficients of 
$n, n^{2}, \ldots, n^{d-2}$ of its Ehrhart polynomial
$i(\Pc,n)$ is negative.  In fact,
\begin{Theorem}\label{main}
	Given an arbitrary integer $d \geq 4$, 
	there exists an integral convex polytope $\Pc$ of dimension $d$ 
	such that 
	each of the coefficients of $n$, $n^2, \ldots, n^{d-2}$ of 
	the Ehrhart polynomial $i(\Pc,n)$ of $\Pc$ is negative. 
\end{Theorem}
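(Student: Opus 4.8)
The plan is to work throughout with the $\delta$-vector of $\Pc$. Writing $\delta(\Pc)=(\delta_0,\dots,\delta_d)$, Ehrhart theory gives $i(\Pc,n)=\sum_{j=0}^{d}\delta_j\binom{n+d-j}{d}$, so the coefficient of $n^k$ equals $\frac{1}{d!}\sum_{j=0}^{d}\delta_j\,c^{(j)}_k$, where $c^{(j)}_k$ is the coefficient of $n^k$ in $p_j(n)=\prod_{\ell=j-d}^{j-1}(n-\ell)$. Since $\delta_0=1$ and $p_0(n)=\prod_{i=1}^d(n+i)$ has all positive coefficients $c^{(0)}_k>0$, the theorem reduces to choosing nonnegative integers $\delta_1,\dots,\delta_d$ so that the $d-2$ linear forms $L_k(\delta)=c^{(0)}_k+\sum_{j\ge 1}\delta_j c^{(j)}_k$ are all negative. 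First I would record the signs of the $c^{(j)}_k$; examining the roots of $p_j$ gives, for example, $c^{(j)}_1=(-1)^{j-1}(j-1)!\,(d-j)!$, so weight placed at even $j$ drives the coefficient of $n$ negative, and analogous oscillating sign data governs each $L_k$.

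Next I would realize a suitable $\delta$-vector by an explicit lattice simplex. For $\Delta(\mathbf a;q)=\con(0,e_1,\dots,e_{d-1},(a_1,\dots,a_{d-1},q))$ the lattice points of the fundamental parallelepiped are indexed by $m\in\{0,1,\dots,q-1\}$, the point of index $m$ sitting at height $h(m)=\lceil \frac{m}{q}+\sum_{j=1}^{d-1}\{-\frac{m a_j}{q}\}\rceil$, whence $\delta_i=\#\{m: h(m)=i\}$; this gives direct control of $\delta(\Pc)$ through $\mathbf a$ and $q$. As a building block, taking $\mathbf a$ to have $s$ entries equal to $1$ and $s-1$ entries equal to $q-1$ (the rest $0$) makes $h(m)\equiv s$, producing the single–bump vector $(1,0,\dots,0,q-1,0,\dots,0)$ with mass at position $s$; for $d=4$ the choice $s=2$ already gives $\delta=(1,0,q-1,0,0)$, for which both the $n$- and $n^2$-coefficients are negative once $q$ is large, settling that case. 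For general $d$ I would instead choose $\mathbf a$ so that the heights $h(m)$ populate the positions $2,\dots,d-1$ in carefully \emph{unequal} proportions, since equal proportions create cancellations that leave some middle coefficient positive.

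The remaining, and hardest, point is to show that the inequalities $L_k(\delta)<0$ $(1\le k\le d-2)$ are simultaneously solvable by a \emph{realizable} nonnegative integer vector, uniformly in $d$. The natural route is to prove that the vectors $(c^{(j)}_1,\dots,c^{(j)}_{d-2})$ for $1\le j\le d$ positively span $\RR^{d-2}$; then some nonnegative combination dominates $-(c^{(0)}_1,\dots,c^{(0)}_{d-2})$ coordinatewise, and replacing it by a large integer multiple forces every middle coefficient negative. I expect the main obstacle to be exactly this interplay: because $c^{(j)}_k$ oscillates in both $j$ and $k$, the weights $\delta_j$ must be spread asymmetrically so that no single coefficient is left positive, while at the same time the prescribed distribution of weights must be matched by the box-point heights of an actual simplex $\Delta(\mathbf a;q)$. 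Carrying out both the positive-spanning verification and the realization for every $d\ge 4$ at once, rather than case by case, is where the real work lies.
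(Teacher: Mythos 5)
Your preparatory material is sound as far as it goes: the expansion $i(\Pc,n)=\sum_{j=0}^{d}\delta_j\binom{n+d-j}{d}$, the sign formula $c^{(j)}_1=(-1)^{j-1}(j-1)!\,(d-j)!$, the box-point height formula for $\Delta(\mathbf a;q)$, and the $d=4$ computation with $\delta=(1,0,q-1,0,0)$ (where the coefficients of $n$ and $n^2$ are $\frac{50-2(q-1)}{24}$ and $\frac{35-(q-1)}{24}$, both negative for $q$ large) all check out. But the proof stops exactly where the theorem lives. For general $d$ you never produce a $\delta$-vector making all $d-2$ middle coefficients negative: the claim that the vectors $(c^{(j)}_1,\dots,c^{(j)}_{d-2})$, $1\le j\le d$, positively span $\RR^{d-2}$ is proposed as ``the natural route'' but not proved, and nothing in your sign analysis (which only pins down $c^{(j)}_1$) establishes it. Worse, even granting solvability of the linear system $L_k(\delta)<0$ in nonnegative integers, you still owe the \emph{realization} step: an arbitrary nonnegative integer vector with $\delta_0=1$ need not be the $\delta$-vector of any integral polytope ($\delta$-vectors obey nontrivial inequalities, and no characterization is known in dimension $\ge 3$), and matching a prescribed unequal distribution of heights $h(m)$ by an actual choice of $(\mathbf a,q)$ is a genuine number-theoretic problem you leave untouched. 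Your own closing sentence concedes that both steps remain to be carried out, so what you have is a program, not a proof.

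The paper avoids this entire difficulty with a one-line construction you could adopt to close the gap: take $\Pc_m^{(d)}=\ell_{d-3}\times\cdots\times\ell_{d-3}\times\Qc_m^{(3)}$ ($d-3$ segment factors, via Lemma \ref{lem} and Examples \ref{ex0}, \ref{ex1}), so that
\[
i(\Pc_m^{(d)},n)=((d-3)n+1)^{d-3}\left(\frac{m}{6}n^3+n^2+\frac{-m+12}{6}n+1\right).
\]
Here every middle coefficient is an explicit affine function of $m$ whose leading term in $m$ is negative: for $3\le j\le d-2$ it equals $-(d-3)^{j-3}\frac{g(d,j)}{6}m$ plus a constant, and the positivity $g(d,j)=(d-3)^2\binom{d-3}{j-1}-\binom{d-3}{j-3}>0$ is settled by an elementary Pascal-type induction (Lemma \ref{lem1}); taking $m$ large finishes uniformly in $d$. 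The structural contrast is instructive: the product construction trades your hard simultaneous-control problem (one free parameter per coefficient, coupled through realizability) for a single free parameter $m$ multiplying a fixed sign pattern, which is why it succeeds with only a binomial inequality. If you want to rescue your $\delta$-vector framework, the quickest fix is to compute the $\delta$-vector of such a product rather than to prove positive spanning from scratch.
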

This theorem says that all the possible coefficients of Ehrhart polynomials can be negative.

The second purpose of the present paper is the study of the sign patterns of 
the coefficients of the Ehrhart polynomials of integral convex polytopes. 
In the present paper, as a futher investigation on the sign of the coefficients of Ehrhart polynomials, 
we prove the following theorem. 
\begin{Theorem}\label{main1}
	Given arbitrary integers $d$ and $k$ with $1 \leq k \leq d-2$, 
	there exists an integral convex polytope $\Pc$ of dimension $d$ such that 
	the coefficient of $n^k$ of $i(\Pc,n)$ is negative and all its remaining coefficients are positive. 
\end{Theorem}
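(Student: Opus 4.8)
The plan is to pass to the $\delta$-vector (or $h^{\ast}$-vector) $(\delta_0,\ldots,\delta_d)$ of $\Pc$, determined by $\sum_{n\ge0} i(\Pc,n)\,t^n = (\delta_0+\delta_1 t+\cdots+\delta_d t^d)/(1-t)^{d+1}$, because it turns the problem into linear algebra on the coefficients. Writing $g_j(n)=\binom{n+d-j}{d}$ and inverting the identity yields
\[
i(\Pc,n)=\sum_{j=0}^{d}\delta_j\,g_j(n),
\]
so, expanding $i(\Pc,n)=\sum_{k=0}^d a_k n^k$, every coefficient is a linear form $a_k=\sum_{j=0}^d \delta_j\,c_{k,j}$ whose entries are
\[
c_{k,j}=[n^k]\binom{n+d-j}{d}=\frac{(-1)^{d-k}}{d!}\,\sigma_{d-k}(j-1,j-2,\ldots,j-d),
\]
with $\sigma_m$ the $m$-th elementary symmetric polynomial. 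Three facts organize the argument: $\delta_0=1$ is forced and $c_{0,j}=0$ for $j\ge1$, so $a_0=1$ automatically; $c_{k,0}=\sigma_{d-k}(1,\ldots,d)/d!>0$ for all $k$, so $g_0$ contributes positively to every coefficient; and, by the result quoted in the introduction, $a_d$ and $a_{d-1}$ are positive for any polytope. Hence it remains only to arrange $a_k<0$ while keeping $a_i>0$ for $1\le i\le d-2$, $i\neq k$.

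Next I would design a target $\delta$-vector. Since $c_{k,0}>0$, a nonnegative combination $g_0+\sum_{j\ge1}\delta_j g_j$ can force $a_k<0$ only through indices $j$ with $c_{k,j}<0$; for such a $j$ used alone, $a_k$ becomes negative exactly once $\delta_j$ passes the threshold $c_{k,0}/|c_{k,j}|=\sigma_{d-k}(1,\ldots,d)/|\sigma_{d-k}(j-1,\ldots,j-d)|$, whereas any coefficient $a_i$ with $c_{i,j}\ge0$ can never be driven negative. The idea is thus to pick, for the prescribed $k$, one or two indices $j$ together with magnitudes $\delta_j$ lying in a carefully chosen window, so that $a_k$ just crosses its threshold while every other $a_i$ stays strictly positive; correctness then reduces to a finite system of inequalities in the numbers $\sigma_m(1,\ldots,d)$ and $\sigma_m(j-1,\ldots,j-d)$. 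For instance $\delta=(1,0,12,0)$ recovers Stanley's single negative $a_1$, $\delta=(1,0,30,0,0)$ settles $(d,k)=(4,1)$, and $\delta=(1,0,20,20,0)$ settles $(4,2)$; the last genuinely needs two spikes, since for $d=4$ no single $g_j$ drives $a_2$ negative before $a_1$.

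With an admissible nonnegative integer vector $(1,\delta_1,\ldots,\delta_d)$ in hand, the remaining task is to realize it by an actual lattice simplex. I would take
\[
\Pc=\con\{0,e_1,\ldots,e_{d-1},(b_1,\ldots,b_{d-1},q)\},\qquad q=\textstyle\sum_j\delta_j,\ \ 0\le b_m<q,
\]
whose normalized volume is $q$. Counting the lattice points of the half-open fundamental parallelepiped of the cone over $\Pc$, graded by height, gives
\[
\delta_i=\#\Big\{\,s\in\{0,1,\ldots,q-1\}:\big\lceil\,\textstyle\sum_{m=1}^{d-1}\{-sb_m/q\}+s/q\,\big\rceil=i\,\Big\},
\]
where $\{x\}$ denotes the fractional part, and one selects the residues $b_m$ so that this height statistic attains each value $i$ exactly $\delta_i$ times, reproducing the target. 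Alternatively one may invoke a known realizability criterion for $\delta$-vectors of simplices.

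I expect the crux to be performing the two middle steps uniformly in $d$ and $k$ rather than case by case. On the sign side, one must show that for every $k\le d-2$ some legal choice of indices and magnitudes exists; this rests on the behaviour of $\sigma_{d-k}(j-1,\ldots,j-d)$ as the block of integers $\{j-1,\ldots,j-d\}$ slides across $0$, that is, on locating where the threshold $\sigma_{d-k}(1,\ldots,d)/|\sigma_{d-k}(j-1,\ldots,j-d)|$ is minimized over $k$. On the realizability side, one must engineer the residues $b_m$ so that the ceiling statistic realizes the prescribed height distribution, and controlling $\lceil\sum_m\{-sb_m/q\}+s/q\rceil$ simultaneously for all $s$ is the delicate point. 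Once both are secured, the required sign pattern of $i(\Pc,n)$ is read off directly from the coefficient formula $a_k=\sum_j \delta_j c_{k,j}$.
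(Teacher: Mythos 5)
Your reduction to the $\delta$-vector is sound as far as it goes: the expansion $i(\Pc,n)=\sum_{j=0}^d\delta_j\binom{n+d-j}{d}$, the formula for $c_{k,j}$ via elementary symmetric polynomials of $j-1,\ldots,j-d$, and your three sample vectors (I checked $(1,0,30,0,0)$ gives $\frac{1}{24}(31n^4+70n^3+5n^2-10n+24)$ and $(1,0,20,20,0)$ gives $\frac{1}{24}(41n^4+10n^3-5n^2+50n+24)$) are all correct. But the two steps that would constitute the actual proof are both left open, and the second fails as stated. First, the design step: you never show that for \emph{every} pair $(d,k)$ with $1\le k\le d-2$ there is a nonnegative integer vector $(1,\delta_1,\ldots,\delta_d)$ making $a_k<0$ while all other $a_i$ stay positive. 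That is precisely the content of the theorem; you handle $(3,1)$, $(4,1)$, $(4,2)$ and explicitly defer the uniform analysis of $\sigma_{d-k}(j-1,\ldots,j-d)$ as the block slides past $0$ --- and already at $(4,2)$ one spike does not suffice, so for general $(d,k)$ one must engineer cancellations among several linear forms simultaneously, which is a genuine combinatorial problem, not bookkeeping. Second, the realization step: there is no ``known realizability criterion'' to invoke. The characterization of $\delta$-vectors of lattice simplices is open, and nonnegative integer vectors with $\delta_0=1$ need not be realizable (they are constrained by, e.g., $\delta_1\ge\delta_d$ and Stanley's partial-sum inequalities), so the linear-algebra design cannot be decoupled from realizability. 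Your ceiling-statistic formula for the fundamental parallelepiped is the right tool, but ``one selects the residues $b_m$ so that the height statistic attains each value $i$ exactly $\delta_i$ times'' is exactly the hard part, and you do not carry it out for any infinite family.

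The paper sidesteps both difficulties with an elementary product construction that never raises a realizability question. For $k=1$ it multiplies fixed examples in dimensions $3,4,5$ (with Ehrhart polynomials $\frac{13}{6}n^3+n^2-\frac{1}{6}n+1$, $i(\Pc^{(4)},n)$, $i(\Pc^{(5)},n)$) by powers of $i(\Qc_{12}^{(3)},n)=2n^3+n^2+1$, checking by induction that the coefficient of $n$ stays negative and all others stay positive; then a one-line shifting lemma finishes: if $i(\Pc,n)$ has its unique negative coefficient $-a_r$ at $n^r$, then $(mn+1)\,i(\Pc,n)$, the Ehrhart polynomial of $\ell_m\times\Pc$, has for $m\gg0$ its unique negative coefficient at $n^{r+1}$, since that coefficient is $-ma_r+a_{r+1}$ while every other coefficient is $ma_{i-1}+a_i$ or $ma_{r-1}-a_r$, dominated by a positive term. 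Iterating $k-1$ times from dimension $d-k+1$ proves the theorem. If you wish to salvage your route, note that multiplication by $(mn+1)$ is an operation you could emulate on $\delta$-vectors; but as submitted, your argument is a program rather than a proof, with both pillars --- the uniform admissible design and its realization by lattice simplices --- missing.
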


Finally, we also consider all the possible sign patterns of the coefficients of the Ehrhart polynomials 
of low dimensional integral convex polytopes (Proposition \ref{main2}).

\section{The existence of Ehrhart polynomials with negative coefficients}

In this section, we prove Theorem \ref{main}.
First, we collect some examples (Example \ref{ex0} and \ref{ex1}) 
which we will use in the proofs of Theorem \ref{main}, \ref{main1} and Proposition \ref{main2}. 

\begin{Example}\label{ex0}{\em 
		Let $m$ be an arbitrary positive integer and let 
		$$\ell_m=\{\alpha \in \RR : 0 \leq \alpha \leq m\}.$$ 
		Note that this $\ell_m$ is nothing but an integral convex polytope of dimension 1. 
		Then the Ehrhart polynomial $i(\ell_m,n)$ is equal to $mn+1$. 
	}\end{Example}
	
	\begin{Example}\label{ex1}{\em 
			It is known \cite[Example 3.22]{BeckRobins} that given an arbitrary integer $m \geq 1$, 
			there exists an integral convex polytope $\Qc_m^{(3)}$ of dimension 3 with 
			\[
			i(\Qc_m^{(3)}, n)=\frac{\,m\,}{6}n^3+n^2+\frac{\,-m+12\,}{6}n+1.
			\]
		}\end{Example}

	Next, we recall the following well-known fact. 
	\begin{Lemma}\label{lem}
		Let $f_1(n)$ and $f_2(n)$ be the Ehrhart polynomials of some integral convex polytopes of dimension $d_1$ and $d_2$, respectively. 
		Then there exists an integral convex polytope of dimension $d_1+d_2$ whose Ehrhart polynomial is equal to $f_1(n) \cdot f_2(n)$. 
	\end{Lemma}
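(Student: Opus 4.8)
The plan is to realize the product $f_1(n) \cdot f_2(n)$ as the Ehrhart polynomial of a Cartesian product of polytopes. First I would fix integral convex polytopes $\Pc_1 \subset \RR^{N_1}$ and $\Pc_2 \subset \RR^{N_2}$, of dimensions $d_1$ and $d_2$, with $i(\Pc_1,n) = f_1(n)$ and $i(\Pc_2,n) = f_2(n)$; these exist by hypothesis. Then I would form the product
\[
\Pc = \Pc_1 \times \Pc_2 \subset \RR^{N_1} \times \RR^{N_2} = \RR^{N_1+N_2}.
\]
The vertices of $\Pc_1 \times \Pc_2$ are precisely the pairs $(v,w)$ with $v$ a vertex of $\Pc_1$ and $w$ a vertex of $\Pc_2$; since all such coordinates are integers, $\Pc$ is again an integral convex polytope. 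Moreover $\dim \Pc = \dim \Pc_1 + \dim \Pc_2 = d_1 + d_2$, because the affine hull of a product is the product of the affine hulls.

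The heart of the argument is the identity of lattice-point counts, and the only point that needs any care is that dilation commutes with the product: applying $n\Pc = \{ n\alpha : \alpha \in \Pc \}$ coordinatewise gives $n\Pc = (n\Pc_1) \times (n\Pc_2)$ for every positive integer $n$. Consequently a point $(\alpha,\beta) \in \RR^{N_1+N_2}$ lies in $n\Pc \cap \ZZ^{N_1+N_2}$ if and only if $\alpha \in n\Pc_1 \cap \ZZ^{N_1}$ and $\beta \in n\Pc_2 \cap \ZZ^{N_2}$. This yields a bijection
\[
n\Pc \cap \ZZ^{N_1+N_2} \; \longleftrightarrow \; (n\Pc_1 \cap \ZZ^{N_1}) \times (n\Pc_2 \cap \ZZ^{N_2}),
\]
whence $i(\Pc,n) = i(\Pc_1,n) \cdot i(\Pc_2,n) = f_1(n) f_2(n)$ for every $n \geq 1$.

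Since two polynomials that agree at all positive integers are equal, this shows that $\Pc$ has Ehrhart polynomial $f_1(n) \cdot f_2(n)$, as required. I do not expect a genuine obstacle here: the statement is essentially a repackaging of the fact that both the dilation operation and the integer lattice factor through the Cartesian product. The single step worth isolating is the commutation $n(\Pc_1 \times \Pc_2) = (n\Pc_1) \times (n\Pc_2)$, after which the product formula for cardinalities does the rest.
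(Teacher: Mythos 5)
Your proposal is correct and follows exactly the paper's approach: the paper's entire proof is ``Take the direct product of two integral convex polytopes,'' and you have simply supplied the routine verifications (integrality of vertices, additivity of dimension, and the commutation $n(\Pc_1 \times \Pc_2) = (n\Pc_1) \times (n\Pc_2)$ giving the product formula for lattice-point counts) that the paper leaves implicit.
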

	\begin{proof}
		Take the direct product of two integral convex polytopes. 
	\end{proof}
%\begin{Lemma}
%\label{lem1}
%Let $\Pc \subset \RR^N$ be an integral convex polytope of dimension $d$ and 
%$i(\Pc,n)$ its Ehrhart polynomial. Then, given an arbitrary integer $k > 0$,
%there exists an integral convex polytope 
%$\Pc'_{k} \subset \RR^{N+1}$ of dimension $d+1$ whose Ehrhart polynomial 
%is equal to 
%$(kn+1)i(\Pc,n)$.  
%\end{Lemma}

%\begin{proof}
%It follows immediately that the Ehrhart polynomial $i(\Pc'_{k},n)$ of 
%the integral convex polytope $\Pc'_{k}=\Pc \times [0,k] \subset \RR^{N+1}$
% \[
% \Pc'_{k}=\Pc \times [0,k]=
% \con(\{ (\alpha,0),(\alpha,k) \in \RR^{N+1} \, : \, \alpha \in \Pc \})
% \] 
%coincides with $(kn+1)i(\Pc,n)$.
%\end{proof}

Our proof of Theorem \ref{main} will be given after preparing Lemma \ref{lem1}.
\begin{Lemma}
\label{lem1}
Let $d$ and $j$ be integers with $d \geq 5$ and $3 \leq j \leq d-2$, and
\[
g(d,j)=(d-3)^2 \binom{d-3}{j-1}-\binom{d-3}{j-3}.
\] 
Then one has $g(d,j) > 0$. 
\end{Lemma}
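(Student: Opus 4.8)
The plan is to substitute $m = d-3$, so that the hypotheses become $m \geq 2$ and $3 \leq j \leq m+1$, and $g(d,j) = m^2\binom{m}{j-1} - \binom{m}{j-3}$. First I would check that both binomial coefficients are strictly positive: since $j \geq 3$ and $j \leq m+1$ we have $2 \leq j-1 \leq m$ and $0 \leq j-3 \leq m-2$, so neither lower index falls outside the range $\{0,1,\ldots,m\}$. Hence proving $g(d,j) > 0$ is equivalent to proving the single inequality $m^2 > \binom{m}{j-3}\big/\binom{m}{j-1}$.

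The next step is to simplify this ratio of binomial coefficients. Cancelling the common factor $m!$ and using the factorials directly gives
\[
\frac{\binom{m}{j-3}}{\binom{m}{j-1}} = \frac{(j-1)(j-2)}{(m-j+2)(m-j+3)},
\]
so after clearing the positive denominator the desired inequality becomes
\[
m^2 (m-j+2)(m-j+3) > (j-1)(j-2).
\]

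Finally I would bound the two sides separately. Because $j \leq m+1$ we have $j-1 \leq m$ and $j-2 \leq m-1$, whence $(j-1)(j-2) \leq m(m-1) = m^2 - m < m^2$, the strictness coming from $m \geq 2 > 0$. On the other hand, $j \leq m+1$ also gives $m-j+2 \geq 1$ and $m-j+3 \geq 2$, so $(m-j+2)(m-j+3) \geq 1$. Multiplying these bounds yields $m^2(m-j+2)(m-j+3) \geq m^2 > (j-1)(j-2)$, which is exactly what was required.

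There is no serious obstacle here; the only point demanding care is the translation of the constraint $j \leq d-2$ into $j-1 \leq m$, since it is precisely this bound that forces $(j-1)(j-2)$ to remain strictly below $m^2$. Every remaining step is routine algebra, and one should merely be careful to confirm that the binomial indices stay in the admissible range so that the division in the second step is legitimate.
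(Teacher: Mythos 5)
Your proof is correct, and it takes a genuinely different route from the paper. The paper proves the lemma by induction on $d$: it verifies the boundary cases $j=3$ and $j=d-2$ directly, and for $4 \leq j \leq d-3$ it uses Pascal's rule together with $(d-3)^2 = (d-4)^2 + 2d-7$ to derive the recursion $g(d,j) = g(d-1,j) + g(d-1,j-1) + (2d-7)\binom{d-3}{j-1}$, from which positivity follows inductively. You instead argue directly and non-inductively: after the substitution $m = d-3$ you reduce the claim to $m^2 \binom{m}{j-1} > \binom{m}{j-3}$, compute the exact ratio $\binom{m}{j-3}\big/\binom{m}{j-1} = (j-1)(j-2)\big/\bigl((m-j+2)(m-j+3)\bigr)$, and close with the elementary bounds $(j-1)(j-2) \leq m(m-1) < m^2$ and $(m-j+2)(m-j+3) \geq 1$, all of which are justified correctly (including the range checks making both binomial coefficients positive, so the division is legitimate). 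Your argument is arguably cleaner: it needs no case split and no induction bookkeeping, and it in fact yields the stronger quantitative fact that the ratio of the two terms of $g(d,j)$ is at most $m(m-1)/m^2 < 1$ uniformly in $j$; what the paper's recursion buys instead is a structural identity that exhibits $g$ as built from smaller instances via Pascal's rule, at the cost of having to handle the endpoints $j=3$ and $j=d-2$ separately (the inductive step only covers $4 \leq j \leq d-3$). Both proofs are complete and rigorous; yours is the more elementary and self-contained of the two.
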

\begin{proof}
Since $d \geq 5$, one has  
$g(d,3)=(d-3)^2 \binom{d-3}{2}-1>0$ and
$g(d,d-2)=(d-3)^2-\binom{d-3}{2}>0$. 
Thus $g(d,j) > 0$
for $j = 3$ and $j = d - 2$.
Especially the assertion is true for $d = 5$ and $d = 6$. 

We now work with induction on $d$. 
Let $d \geq 7$ and $4 \leq j \leq d-3$.  Then
\begin{align*}
g(d,j)&=((d-4)^2+2d-7)\left(\binom{d-4}{j-1}+\binom{d-4}{j-2}\right)
-\left(\binom{d-4}{j-3}+\binom{d-4}{j-4}\right)\\
&=g(d-1,j)+g(d-1,j-1)+(2d-7)\binom{d-3}{j-1
	}. 
\end{align*}
It follows from the assumption of induction that
$g(d-1,j)+g(d-1,j-1)>0$.
Hence $g(d,j)>0$, as desired. 
\end{proof}

Now, we prove Theorem \ref{main}.
\begin{proof}[Proof of Theorem \ref{main}]

Given an arbitrary integer $d \geq 4$,
from Example \ref{ex0} and \ref{ex1}, and by applying Lemma \ref{lem} repeatedly,
there exists
an integral convex polytope $\Pc_m^{(d)}$ of dimension $d$ such that 
\begin{align*}
i(\Pc_m^{(d)},n) & =i(\ell_{d-3},n)^{d-3}i(\Qc_m^{(3
	)},n) \\
& = ((d-3)n+1)^{d-3}\left(\frac{\,m\,}{6}n^3+n^2+\frac{\,-m+12\,}{6}n+1\right). 
\end{align*}
Let $i(\Pc_m^{(d)},n) = \sum_{i=0}^d c_i^{(d,m)}n^i$ with each $c_i^{(d,m)} \in \QQ$. Then 
\[
c_1^{(d,m)}=\frac{\,-m+12\,}{6}+A_1, \quad\quad
c_2^{(d,m)}=1+\frac{\,-m+12\,}{6}A_1+A_2
\]
and
\[
c_j^{(d,m)}=\frac{\,m\,}{6}A_{j-3}+A_{j-2}+\frac{\,-m+12\,}{6}A_{j-1}+A_j,
\quad\quad
3 \leq j \leq d-2, 
\]
where 
\[
A_i=(d-3)^i\binom{d-3}{i}, \quad\quad 0 \leq i \leq d-2. 
\]

Now, since each $A_j$ is independent of $m$, 
it follows that each of $c_1^{(d,m)}$ and $c_2^{(d,m)}$ is
negative for $m$ sufficiently large.
Let $3 \leq j \leq d-2$.  One has 
\begin{eqnarray*}
c_j^{(d,m)}&=&-\frac{\,A_{j-1}-A_{j-3}\,}{6}m+(A_{j-2}+2A_{j-1}+A_j) \\
&=&-(d-3)^{j-3}\frac{\,g(d,j)\,}{6}m+(A_{j-2}+2A_{j-1}+A_j),
\end{eqnarray*}
where $g(d,j)$ is the same function as in Lemma \ref{lem1}.
Since $g(d,j)>0$, it follows that 
$c_j^{(d,m)}$ can be negative for $m$ sufficiently large. 
Hence, for $m$ sufficiently large, the integral convex polytope 
$\Pc_m^{(d)}$ of dimension $d$ enjoys the required property.
\end{proof}

We conclude this section with

\begin{Remark}
{\em
The polynomial
\begin{align*}
i(\Qc_m^{(3)}, n) &= \frac{\,m\,}{6}n^3+n^2+\frac{\,-m+12\,}{6}n+1 \\
&= \frac{\,1\,}{6}(n + 1)(m n^{2} + (6 - m) n + 6)
\end{align*}
has a real positive zero for $m$ sufficient large.  Hence
$i(\Pc_m^{(d)},n)$ has a real positive zero for $m$ sufficient large. 

Thus in particular, for $m$ sufficient large and for an arbitrary integral
convex polytope $\Qc$, the Ehrhart polynomial
$i(\Pc_m^{(d)} \times \Qc,n)$ of $\Pc_m^{(d)} \times \Qc$ 
also possesses a negative coefficient.
}
\end{Remark} 

We are grateful to Richard Stanley for his suggestion on real positive roots of
Ehrhart polynomials.

\section{Ehrhart polynomials having only one negative coefficient}

In this section we prove Theorem \ref{main1}.
Let $\eb_i^d$ be the $i$th unit coordinate vector of $\RR^d$ for $1 \leq i \leq d$ and let ${\bf 0}^d$ be the origin of $\RR^d$. 
First, we give the following example.
		\begin{Example}\label{ex2}{\em 
				(a) Let $$\Pc^{(4)}=\con(\{{\bf 0}^4, \eb_1^4, \eb_2^4, \eb_3^4, \eb_1^4+26\eb_3^4+27\eb_4^4\}) \subset \RR^4.$$ 
				Then we have 
				$$i(\Pc^{(4)},n)=\frac{9}{8}n^4+\frac{31}{12}n^3+\frac{3}{8}n^2-\frac{1}{12}n+1.$$
				(b) Let \begin{align*}
					\Pc^{(5)}=\con(\{{\bf 0}^5, \eb_1^5, \eb_2^5, \eb_3^5, \eb_4^5, \eb_4^5+\eb_5^5, \eb_1^5+50\eb_4^5+51\eb_5^5\}) \subset \RR^5.
				\end{align*}
				Then we have 
				$$i(\Pc^{(5)},n)=\frac{13}{30}n^5+\frac{55}{24}n^4+\frac{37}{12}n^3+\frac{5}{24}n^2-\frac{1}{60}n+1.$$
			}\end{Example}

			\bigskip
			
			We also prepare the following two lemmas (Lemma \ref{lem2} and Lemma \ref{lem3}). 
			\begin{Lemma}\label{lem2}
				Given an arbitrary integer $d \geq 3$, there exists an integral convex polytope $\Pc$ of dimension $d$ 
				such that the coefficient of $n$ of $i(\Pc,n)$ is negative and all its remaining coefficients are positive. 
			\end{Lemma}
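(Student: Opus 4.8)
The plan is to prove the statement by induction on $d$ in steps of three, taking three explicit polytopes as base cases and multiplying by a single fixed three-dimensional polytope to climb. For the base cases $d=3,4,5$ I would use, respectively, Stanley's tetrahedron $\Qc_{13}^{(3)}$ of Example \ref{ex1}, with Ehrhart polynomial $\frac{13}{6}n^3+n^2-\frac16 n+1$, and the polytopes $\Pc^{(4)},\Pc^{(5)}$ of Example \ref{ex2}; each has exactly one negative coefficient, that of $n$, and in each one has $|c_1|\le \frac16$. Since $\{3,4,5\}$ meets every residue class modulo $3$, it then suffices to produce, from any $d$-dimensional \emph{good} polytope (one whose Ehrhart polynomial has $c_1<0$ and $c_i>0$ for $i\neq 1$), a good polytope of dimension $d+3$.

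The key point will be the choice of the three-dimensional factor. Rather than a general $\Qc_m^{(3)}$, I would take $m=12$: by Example \ref{ex1} this polytope has Ehrhart polynomial $2n^3+n^2+1$, whose coefficient of $n$ is zero. Writing $f(n)=\sum_{i=0}^d c_i n^i$ for the good polynomial and forming, via Lemma \ref{lem}, the product polytope of dimension $d+3$ with Ehrhart polynomial $f(n)\cdot(2n^3+n^2+1)$, its coefficients are simply
\[
P_j = c_j + c_{j-2} + 2\,c_{j-3}, \qquad c_i:=0 \text{ for } i<0 \text{ or } i>d .
\]
Because the linear term of the factor vanishes, the linear coefficient is inherited unchanged, $P_1=c_1<0$, while $P_0=c_0=1$ and $P_2=c_2+1>0$.

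It then remains only to check the finitely many remaining signs. The negative coefficient $c_1$ enters $P_j$ only for $j=3$ and $j=4$, through $P_3=c_3+c_1+2$ and $P_4=c_4+c_2+2c_1$; both are positive because $|c_1|$ is small (at the base cases $|c_1|\le\frac16$, which one verifies directly, and since $P_1=c_1$ this value is preserved verbatim at every step). For $j\ge 5$ one has $P_j=c_j+c_{j-2}+2c_{j-3}$ with all three indices at least $2$, hence $P_j>0$. Moreover $P_2=c_2+1$ shows that after one step the $n^2$-coefficient already exceeds $1$, so the bound $c_2>2|c_1|$ needed for $P_4>0$ holds automatically thereafter, and the induction closes.

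The step I expect to be the real crux — and the only genuinely non-routine one — is the choice of multiplier. A naive factor such as a segment $\ell_a$ or a general $\Qc_m^{(3)}$ redistributes the negative coefficient and can force a second coefficient negative; for instance $\ell_a\times\Qc_m^{(3)}$ cannot keep both $c_1<0$ and $c_2>0$ for any admissible integers $a,m$, which is exactly why a separate polytope $\Pc^{(4)}$ is needed in dimension $4$. With a general $m$ one is left to solve a compatibility-and-integrality problem (the lower bound on $m$ forcing $c_1<0$ against the upper bounds keeping the other coefficients positive, with $m$ an integer). Pinning $m=12$, so that the factor $2n^3+n^2+1$ has vanishing linear term, sidesteps all of this and reduces the inductive step to the transparent rule $P_j=c_j+c_{j-2}+2c_{j-3}$ together with a short sign check.
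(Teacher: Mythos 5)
Your proposal is correct and is essentially the paper's own proof: the paper uses the same three base cases $\Qc_{13}^{(3)}$, $\Pc^{(4)}$, $\Pc^{(5)}$ and climbs by repeatedly multiplying by the same factor $p(n)=i(\Qc_{12}^{(3)},n)=2n^3+n^2+1$ via Lemma \ref{lem1}, exploiting exactly the vanishing linear term you identify as the crux. Your bookkeeping (the invariant $|c_1|\le\frac16$ with the general rule $P_j=c_j+c_{j-2}+2c_{j-3}$) is a slightly cleaner packaging of the paper's induction, which instead tracks the coefficients of $n$ and $n^2$ explicitly as $-\frac16$ and $s+1$.
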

			\begin{proof}
				From Example \ref{ex1} and Example \ref{ex2}, we have
				\begin{align*}
					&i(\Qc_{13}^{(3)},n)=\frac{13}{6}n^3+n^2-\frac{1}{6}n+1; \\
					&i(\Pc^{(4)},n)=\frac{9}{8}n^4+\frac{31}{12}n^3+\frac{3}{8}n^2-\frac{1}{12}n+1; \\
					&i(\Pc^{(5)},n)=\frac{13}{30}n^5+\frac{55}{24}n^4+\frac{37}{12}n^3+\frac{5}{24}n^2-\frac{1}{60}n+1. 
					%&i(\Qc_{12}^{(3)},n)=2n^3+n^2+1. 
				\end{align*}
				These examples show the required assertion in the cases $d=3,4$ and 5. 
				
				Let $f(n)=i(\Qc_{13}^{(3)},n)$, $g(n)=i(\Pc^{(4)},n)$, $h(n)=i(\Pc^{(5)},n)$ and 
				$$p(n)=i(\Qc_{12}^{(3)},n)=2n^3+n^2+1.$$ 
				By applying Lemma \ref{lem1} repeatedly, one sees that for each integer $s \geq 1$, 
				the polynomial $f(n)\cdot p(n)^s$ is the Ehrhart polynomial of an integral convex polytope of dimension $3(s+1)$. 
				Similarly, $g(n)\cdot p(n)^s$ and $h(n)\cdot p(n)^s$ are also the Ehrhart polynomials of 
				some integral convex polytopes of dimension $3s+4$ and dimension $3s+5$, respectively. 
				Thus, it is enough to prove that the coefficient of $n$ of each of the polynomials 
				$f(n) \cdot p(n)^s$, $g(n) \cdot p(n)^s$ and $h(n) \cdot p(n)^s$ is negative and all the remaining coefficients are positive. 
				
				We will prove by induction on $s$ that for each $s \geq 0$, the coefficient of $n$ of $f(n) \cdot p(n)^s$ 
				is equal to $-\frac{1}{6}$, the coefficient of $n^2$ is equal to $s+1$ 
				and all the remaining coefficients are positive. 
				Suppose that $s \geq 1$ and the polynomial $f(n) \cdot p(n)^{s-1}$ looks like 
				$$f(n) \cdot p(n)^{s-1}=a_{3s}n^{3s} + \cdots + a_3n^3+sn^2-\frac{1}{6}n+1,$$
				where each $a_j>0$. Then the direct computation shows that 
				the coefficients of $n, n^2, n^3$ and $n^4$ of $f(n) \cdot p(n)^s=(f(n) \cdot p(n)^{s-1})\cdot p(n)$ are as follows: 
				\begin{align*}
					&\text{(the coefficient of $n$ of $f(n) \cdot p(n)^s$)}=-\frac{1}{6}; \\
					&\text{(the coefficient of $n^2$ of $f(n) \cdot p(n)^s$)}=s+1; \\
					&\text{(the coefficient of $n^3$ of $f(n) \cdot p(n)^s$)}=a_3-\frac{1}{6}+2 > 0; \\
					&\text{(the coefficient of $n^4$ of $f(n) \cdot p(n)^s$)}=a_4+s-\frac{1}{3} > 0. 
				\end{align*}
				Moreover, since each $a_j$ is positive, all the coefficients of $n^r$ with $r \geq 5$ are also positive. 
				Hence, the coefficient of $n$ of $f(n) \cdot p(n)^s$ is equal to $-\frac{1}{6}$, 
				the coefficient of $n^2$ is equal to $s+1$ and all the remaining coefficients are positive. 
				Thus, by induction on $s$, we conclude that 
				the Ehrhart polynomial $f(n) \cdot p(n)^s$ satisfies the required condition for each $s \geq 0$. 
				In particular, the coefficient of $n$ of $f(n) \cdot p(n)^s$ is negative and all its remaining coefficients are positive. 
				
				By the same discussions as above, we can conclude that 
				the Ehrhart polynomials $g(n) \cdot p(n)^s$ and $h(n) \cdot p(n)^s$ also enjoy the required properties for each $s \geq 1$, 
				as desired. 
			\end{proof}
			
			\begin{Lemma}\label{lem3}
				Given an integer $e \geq 3$, 
				suppose that there is an integral convex polytope $\Pc$ of dimension $e$ such that 
				the coefficient of $n^r$ of $i(\Pc,n)$ is negative and all its remaining coefficients are positive for some $1 \leq r \leq e-2$. 
				Then there exists an integral convex polytope $\Pc'$ of dimension $e+1$ such that 
				the coefficient of $n^{r+1}$ of $i(\Pc',n)$ is negative and all its remaining coefficients are positive. 
			\end{Lemma}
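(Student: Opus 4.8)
The plan is to build $\Pc'$ as the direct product $\Pc \times \ell_m$, where $\ell_m$ is the one-dimensional polytope of Example \ref{ex0} and $m$ is a positive integer to be chosen large. By Lemma \ref{lem}, $\Pc'$ is then an integral convex polytope of dimension $e+1$ whose Ehrhart polynomial is $i(\Pc',n) = (mn+1)\,i(\Pc,n)$. The guiding idea is that multiplication by $mn+1$ shifts the whole coefficient sequence up by one degree (scaled by $m$) and then superimposes the original; in particular the unique negative coefficient, which sits at $n^r$, is carried by the $mn$-part up to degree $r+1$ and amplified by the factor $m$, while nothing negative is ever added at any other degree. So for $m$ large the sole surviving negative coefficient should be the one at $n^{r+1}$, which is exactly what is wanted.

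To make this precise I would write $i(\Pc,n)=\sum_{i=0}^{e} b_i n^i$ with $b_r<0$ and $b_i>0$ for every $i\neq r$ (recall $b_0=1$ and $b_e,b_{e-1}>0$ are automatic). Then the coefficient of $n^k$ in $i(\Pc',n)$ is $m\,b_{k-1}+b_k$, with the convention $b_{-1}=b_{e+1}=0$. The verification splits into a short case analysis. For $k=r+1$ the coefficient equals $m\,b_r+b_{r+1}$ with $b_r<0$ and $b_{r+1}>0$, hence it becomes negative once $m$ is large. For $k=r$ the coefficient equals $m\,b_{r-1}+b_r$; here $b_{r-1}>0$ because $r-1\neq r$, so the leading term dominates and the coefficient is positive for large $m$. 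For every remaining $k$ with $1\le k\le e$ and $k\neq r,r+1$, both indices $k-1$ and $k$ differ from $r$, so $b_{k-1}>0$ and $b_k>0$ and the coefficient is positive for all $m\ge 0$; finally the extreme coefficients are $b_0=1>0$ at $k=0$ and $m\,b_e>0$ at $k=e+1$.

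Since these are finitely many strict inequalities, each valid for all sufficiently large $m$, a single $m$ makes them hold simultaneously. For such $m$ the polytope $\Pc'=\Pc\times\ell_m$ has the coefficient of $n^{r+1}$ negative and all its remaining coefficients positive, and the target degree lies in the admissible range because $r\le e-2$ forces $r+1\le (e+1)-2$. I do not expect a genuine obstacle here; the only point requiring care is the case $k=r$, where one must observe that after the shift the originally negative coefficient at $n^r$ is replaced by $m\,b_{r-1}+b_r$ and therefore turns positive for large $m$ precisely because $b_{r-1}>0$. That single observation is what guarantees the negativity migrates cleanly from $n^r$ to $n^{r+1}$ without leaving a second negative coefficient behind.
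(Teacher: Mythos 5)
Your proposal is correct and follows essentially the same route as the paper: both form $\Pc'=\Pc\times\ell_m$ so that $i(\Pc',n)=(mn+1)\,i(\Pc,n)$, and both rest on the same two observations, that the coefficient $m\,b_r+b_{r+1}$ at $n^{r+1}$ goes negative for large $m$ while the coefficient $m\,b_{r-1}+b_r$ at $n^r$ goes positive since $b_{r-1}>0$. Your case analysis is, if anything, slightly more explicit than the paper's about the boundary coefficients; no gaps.
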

			\begin{proof}
				Let $$i(\Pc,n)=a_en^e+\cdots+a_{r+1}n^{r+1}-a_rn^r+a_{r-1}n^{r-1}+\cdots+a_1n+1,$$ where each $a_j>0$. 
				By applying Lemma \ref{lem1}, we see that 
				there exists an integral convex polytope of dimension $e+1$ such that $i(\Pc',n)=(mn+1)i(\Pc,n)$. (See also Example \ref{ex0}.) Then 
				\begin{align*}
					i(\Pc',n)&=ma_en^{e+1} + (ma_{e-1}+a_e)n^e + \cdots +(ma_{r+1}+a_{r+2})n^{r+2} \\
					&+ (-ma_r+a_{r+1})n^{r+1}+(ma_{r-1}-a_r)n^r \\
					&+(ma_{r-2} + a_{r-1})n^{r-1} + \cdots +(m+a_1)n + 1. 
				\end{align*}
				Hence, for a sufficiently large integer $m$, the coefficient of $n^{r+1}$ of the Ehrhart polynomial $i(\Pc',n)$ 
				is negative and all its remaining coefficients are positive. 
			\end{proof}
			
			Now, we are in the position to give a proof of Theorem \ref{main1}. 
			
			\begin{proof}[Proof of Theorem \ref{main1}]
				For a given integer $d \geq 3$, Lemma \ref{lem2} directly proves the case where $k=1$. 
				Assume $k>1$ and $d > 3$. 
				
				From $k \leq d-2$, we have $d-k+1 \geq 3$. 
				Let $\Rc^{(d-k+1)}$ be an integral convex polytope of dimension $d-k+1$ 
				such that the coefficient of $n$ of $i(\Rc^{(d-k+1)},n)$ is negative and all its remaining coefficients are positive. 
				Note that the existence of such polytope is guaranteed by Lemma \ref{lem2}. 
				Applying Lemma \ref{lem3} for $\Rc^{(d-k+1)}$ repeatedly by $(k-1)$ times 
				proves the existence of an integral convex polytope $\Pc'$ of dimension $d(=(d-k+1)+(k-1))$ 
				satisfying that the coefficient of $n^k$ of $i(\Pc',n)$ is negative and all its remaining coefficients are positive, as desired. 
			\end{proof}

			\bigskip

			\section{A question on possible sign patters of coefficients}
			
			In this section, we consider the following question. 
			\begin{Question}\label{q}
				Given a positive integer $d \geq 3$ and integers $i_1,\ldots,i_q$ with $1 \leq i_1 < \cdots < i_q \leq d-2$, 
				does there exist an integral convex polytope of dimension $d$ whose Ehrhart polynomial satisfies 
				\begin{itemize}
					\item all the coefficients of $n^{i_1},\ldots,n^{i_q}$ are negative and 
					\item all the remaining coefficients are positive? 
				\end{itemize}
			\end{Question}

			We give a partial answer for Question \ref{q}. More precisely, we solve this in the case $d \leq 6$. 
			\begin{Proposition}\label{main2}
				Given an integer $3 \leq d \leq 6$ and integers $i_1,\ldots,i_q$ with $1 \leq i_1 < \cdots < i_q \leq d-2$, 
				there exists an integral convex polytope of dimension $d$ whose Ehrhart polynomial satisfies that 
				all the coefficients of $n^{i_1},\ldots,n^{i_q}$ are negative and all the remaining coefficients are positive. 
			\end{Proposition}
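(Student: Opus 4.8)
Since the coefficients of $n^d$ and $n^{d-1}$ of any Ehrhart polynomial are positive and the constant term is $1$, the only coefficients that can be negative are those of $n^1,\ldots,n^{d-2}$. Proving the proposition therefore amounts to realizing, for each $d\in\{3,4,5,6\}$, every subset $S\subseteq\{1,\ldots,d-2\}$ as the exact negative-support of some Ehrhart polynomial. Three families are already available: the full set $S=\{1,\ldots,d-2\}$ from Theorem \ref{main}, every singleton $S=\{k\}$ from Theorem \ref{main1}, and $S=\emptyset$ from the cube $[0,m]^d$, whose Ehrhart polynomial $(mn+1)^d$ is positive. In particular dimensions $3$ and $4$ are completely covered, and the remaining work is to fill in the intermediate patterns in dimensions $5$ and $6$.

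The main tool is the product operation of Lemma \ref{lem}, used in two guises to handle the ``interval'' patterns. First, multiplication by $\ell_m$ with $m$ large sends a realized polynomial $f(n)=\sum_j a_jn^j$ in dimension $e$ to $(mn+1)f(n)$ in dimension $e+1$, whose $n^j$-coefficient $ma_{j-1}+a_j$ has the sign of $a_{j-1}$; this is the shift used in Lemma \ref{lem3}, and applied to the full pattern of a dimension-$4$ polytope it yields $S=\{2,3\}$ in dimension $5$ (and likewise $\{2,3,4\}$ in dimension $6$). Second, the product $\Qc_M^{(3)}\times[0,1]^2$ with $M$ large realizes $S=\{1,2\}$ in dimension $5$: a direct expansion shows the coefficients of $n^1$ and $n^2$ are $4-M/6$ and $6-M/3$, while the coefficient of $n^3$ equals the constant $4$ (the $M$-terms cancel) and the remaining coefficients are positive. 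Analogous products $\Qc_M^{(3)}\times[0,1]^{d-3}$ together with shifts dispose of the remaining monotone patterns in dimension $6$.

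The genuinely hard patterns are the \emph{non-monotone} ones, in which a negative coefficient is flanked by positive coefficients on both sides -- the smallest instance being $S=\{1,3\}$ in dimension $5$. These resist the operations above: in any factorization as a product of lower-dimensional polytopes the coefficient sitting between two negatives is itself a convolution whose dominant term is forced negative, so the intended positive ``gap'' collapses. For each such pattern I would instead exhibit an explicit integral polytope, in the spirit of the explicit polytopes of Example \ref{ex2}, and compute its Ehrhart polynomial directly. Because $d\le 6$ there are only finitely many patterns (two, four, eight and sixteen in dimensions $3,4,5,6$ respectively), so the non-monotone ones can be treated one at a time by writing down a polytope whose $\delta$-vector produces the required alternating signs.

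The main obstacle is exactly this loss of control under products: the coefficient of $n^k$ in a product is the convolution $\sum_{i+j=k}a_ib_j$, so factors each carrying a negative coefficient interact and tend to force the intermediate coefficients negative as well. There is no single closed-form polytope realizing all of $S$ simultaneously, and the non-monotone patterns must be supplied by ad hoc explicit examples. This bookkeeping is manageable for $d\le 6$, but controlling arbitrarily many interacting negative coefficients -- in particular arbitrarily long alternating blocks -- is what blocks a uniform argument for all $d$, and is the reason the proposition is stated only up to dimension $6$.
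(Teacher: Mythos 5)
Your framing is right and several of your computations check out: reducing to the finitely many sign patterns not covered by Theorems \ref{main} and \ref{main1}, the shift observation that multiplying by $(mn+1)$ with $m$ large moves the whole negative support up by one (giving $\{2,3\}$ in dimension $5$ and $\{2,3,4\}$ in dimension $6$), and the product $\Qc_M^{(3)}\times[0,1]^2$ realizing $\{1,2\}$ in dimension $5$ (your coefficients $4-M/6$, $6-M/3$ and the cancellation to the constant $4$ at $n^3$ are all correct). This matches the paper's strategy in spirit: its proof of Proposition \ref{main2} likewise invokes the two theorems and then disposes of the remaining thirteen patterns case by case via Example \ref{Tsuchiya}. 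But your proposal has a genuine gap: for the non-contiguous patterns --- $(1,3)$ in dimension $5$ and $(1,3),(1,4),(2,4),(1,2,4),(1,3,4)$ in dimension $6$ --- you write that you ``would exhibit an explicit integral polytope'' and compute its Ehrhart polynomial, without doing so. That is not a detail one may defer: there is no general existence principle for prescribed sign patterns (the paper leaves exactly this open for $d\geq 7$ as Question \ref{q}), so the entire content of the proposition beyond the two theorems \emph{is} the exhibition and verification of these examples, which the paper supplies concretely (e.g.\ the simplex $\Pc_{1,3}^{(5)}=\con(\{{\bf 0}^5,\eb_1^5,\ldots,\eb_4^5,3\eb_1^5+4\eb_2^5+5\eb_3^5+8\eb_4^5+371\eb_5^5\})$). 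Even within your ``monotone'' cases, $\{1,2\}$ in dimension $6$ does not follow from the tools you state --- shifts move the block up and $\Qc_M^{(3)}\times[0,1]^3$ yields $\{1,2,3\}$ --- though it can be rescued by a product such as $\Qc_M^{(3)}\times\Qc_6^{(3)}$, exploiting the same cancellation you noticed.

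Moreover, your heuristic obstruction --- that in any product ``the coefficient sitting between two negatives is itself a convolution whose dominant term is forced negative'' --- is false, and the paper's examples refute it: $\Pc_{1,4}^{(6)}=\Qc_{12}^{(3)}\times\Qc_{16}^{(3)}$ and $\Pc_{1,2,4}^{(6)}=\Qc_{10}^{(3)}\times\Qc_{100}^{(3)}$ realize gap patterns as products of two $3$-dimensional polytopes, $\Pc_{2,4}^{(6)}=\ell_{40}\times\Pc_{1,3}^{(5)}$ shifts a gap pattern, and $\Pc_{1,3}^{(6)}$ is again a product with a $\Qc_m^{(3)}$ factor. The mechanism is precisely the cancellation phenomenon you observed in your own $\{1,2\}$ computation: because the coefficients of $\Qc_m^{(3)}$ depend on $m$ with opposite signs in degrees $3$ and $1$, the $m$-terms in intermediate convolution coefficients can cancel or be tuned, so products control non-monotone patterns far better than you claim. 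So the correct repair of your proof is not ad hoc simplices for every gap pattern (the paper needs genuinely new simplices only for a few cases, such as $(1,3)$ in dimension $5$), but tuned products $\Qc_{m_1}^{(3)}\times\Qc_{m_2}^{(3)}$ and $\ell_m\times(\cdot)$ --- and in any case the finite verification must actually be carried out for the proof to be complete.
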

			\begin{proof}
				Theorem \ref{main1} guarantees the case $q=1$ for every $d \geq 3$. 
				Moreover, Theorem \ref{main} guarantees the case $q=d-2$ for every $d \geq 3$. 
				
				Thus, the remaining cases are as follows: 
				\begin{itemize}
					\item[(a)] $d=5, q=2$ and $(i_1,i_2)=(1,2),(1,3),(2,3)$; 
					\item[(b)] $d=6, q=2$ and $(i_1,i_2)=(1,2),(1,3),(1,4),(2,3),(2,4),(3,4)$; 
					\item[(c)] $d=6, q=3$ and $(i_1,i_2,i_3)=(1,2,3),(1,2,4),(1,3,4),(2,3,4)$. 
				\end{itemize}
				Each of these cases is guaranteed by Example \ref{Tsuchiya} below, as required. 
			\end{proof}
			
			\begin{Example}\label{Tsuchiya}{\em 
					(a) Let 
					\begin{align*}
						&\Pc_{1,2}^{(5)}=\con(\{{\bf 0}^5, \eb_1^5,\eb_2^5,\eb_3^5,\eb_4^5,\eb_1^5+\eb_2^5+99\eb_4^5+100\eb_5^5\}) \subset \RR^5, \\
						&\Pc_{1,3}^{(5)}=\con(\{{\bf 0}^5, \eb_1^5,\eb_2^5,\eb_3^5,\eb_4^5,3\eb_1^5+4\eb_2^5+5\eb_3^5+8\eb_4^5+371\eb_5^5\}) \subset \RR^5, \\
						&\Pc_{2,3}^{(5)}=\ell_{10} \times \ell_{10} \times \Qc_{100}^{(3)} \subset \RR^5, 
					\end{align*}
					where $\ell_{10}$ and $\Qc_{100}^{(3)}$ are the polytopes appearing in Example \ref{ex0} and Example \ref{ex1}, respectively. Then 
					\begin{align*}
						&i(\Pc_{1,2}^{(5)},n)=\frac{5}{6}n^5+\frac{17}{4}n^4+\frac{29}{6}n^3-\frac{9}{4}n^2-\frac{8}{3}n+1, \\
						&i(\Pc_{1,3}^{(5)},n)=\frac{371}{120}n^5+\frac{1}{8}n^4-\frac{1}{24}n^3+\frac{15}{8}n^2-\frac{1}{20}n+1, \\
						&i(\Pc_{2,3}^{(5)},n)=\frac{5000}{3}n^5+\frac{1300}{3}n^4-1430n^3-\frac{577}{3}n^2+\frac{16}{3}n+1. 
					\end{align*}
					
					(b) Let 
					\begin{align*}
						&\Pc_{1,2}^{(6)}=\con(\{{\bf 0}^6, \eb_1^6,\eb_2^6,\eb_3^6,\eb_4^6,\eb_5^6,\eb_1^6+\eb_2^6+999\eb_5^6+1000\eb_6^6\}) \subset \RR^6, \\
						&\Pc_{1,3}^{(6)}=\con(\{\eb_1^3,\eb_1^3+\eb_2^3, \eb_1^3+\eb_3^3,\eb_2^3+\eb_3^3,3\eb_1^3+4\eb_3^3,4\eb_1^3+\eb_2^3+3\eb_3^3\}) 
						\times \Qc_{26}^{(3)} \subset \RR^6, \\
						&\Pc_{1,4}^{(6)}=\Qc_{12}^{(3)} \times \Qc_{16}^{(3)} \subset \RR^6, \\
						&\Pc_{2,3}^{(6)}=\ell_2 \times \ell_2 \times \ell_2 \times \Qc_{30}^{(3)} \subset \RR^6, \\
						&\Pc_{2,4}^{(6)}=\ell_{40} \times \Pc_{1,3}^{(5)} \subset \RR^6, \\
						&\Pc_{3,4}^{(6)}=\con(\{{\bf 0}^6, \eb_1^6,\eb_2^6,\eb_3^6,\eb_4^6,\eb_5^6,\eb_1^6+\eb_2^6+\eb_3^6+999(\eb_4^6+\eb_5^6)+1000\eb_6^6\}) \subset \RR^6, 
					\end{align*}
					where $\Pc_{1,3}^{(5)}$ is the polytope appearing in (a) above. Then 
					\begin{align*}
						&i(\Pc_{1,2}^{(6)},n)=\frac{25}{18}n^6+\frac{751}{60}n^5+\frac{2515}{72}n^4+\frac{131}{6}n^3-\frac{2435}{72}n^2-\frac{617}{20}n+1, \\
						&i(\Pc_{1,3}^{(6)},n)=\frac{130}{9}n^6+\frac{137}{6}n^5+\frac{55}{9}n^4-\frac{2}{3}n^3+\frac{4}{9}n^2-\frac{1}{6}n+1, \\
						&i(\Pc_{1,4}^{(6)},n)=\frac{16}{3}n^6+\frac{14}{3}n^5-\frac{1}{3}n^4+4n^3+2n^2-\frac{2}{3}n+1, \\
						&i(\Pc_{2,3}^{(6)},n)=40n^6+68n^5+18n^4-17n^3-5n^2+3n+1, \\
						&i(\Pc_{2,4}^{(6)},n)=\frac{371}{3}n^6+\frac{971}{120}n^5-\frac{37}{24}n^4+\frac{1799}{24}n^3-\frac{1}{8}n^2+\frac{799}{20}n+1, \\
						&i(\Pc_{3,4}^{(6)},n)=\frac{25}{18}n^6+\frac{503}{120}n^5-\frac{241}{36}n^4-\frac{475}{24}n^3+\frac{281}{36}n^2+\frac{191}{10}n+1. 
					\end{align*}
					
					(c) Let 
					\begin{align*}
						&\Pc_{1,2,3}^{(6)}=\ell_1 \times \ell_1 \times \ell_1 \times \Qc_{40}^{(3)} \subset \RR^6, \\
						&\Pc_{1,2,4}^{(6)}=\Qc_{10}^{(3)} \times \Qc_{100}^{(3)} \subset \RR^6, \\
						&\Pc_{1,3,4}^{(6)}=\ell_1 \times \con(\{{\bf 0}^5, \eb_1^5,\eb_2^5,\eb_3^5,\eb_4^5,\eb_1^5+\eb_2^5+2\eb_3^5+10\eb_4^5+1000\eb_5^5\}) \subset \RR^6, \\
						&\Pc_{2,3,4}^{(6)}=\ell_3 \times \ell_3 \times \ell_3 \times \Qc_{40}^{(3)} \subset \RR^6. 
					\end{align*}Then \begin{align*}
					&i(\Pc_{1,2,3}^{(6)},n)=\frac{20}{3}n^6+21n^5+\frac{55}{3}n^4-\frac{10}{3}n^3-10n^2-\frac{5}{3}n+1, \\
					&i(\Pc_{1,2,4}^{(6)},n)=\frac{250}{9}n^6+\frac{55}{3}n^5-\frac{161}{9}n^4+4n^3-\frac{26}{9}n^2-\frac{43}{3}n+1, \\
					&i(\Pc_{1,3,4}^{(6)},n)=\frac{25}{3}n^6+\frac{26}{3}n^5-\frac{11}{3}n^4-\frac{7}{3}n^3+\frac{1}{3}n^2-\frac{1}{3}n+1, \\
					&i(\Pc_{2,3,4}^{(6)},n)=180n^6+207n^5-39n^4-\frac{250}{3}n^3-14n^2+\frac{13}{3}n+1. 
				\end{align*}

			}\end{Example}

\end{document}